\def\Ddots{\mathinner{\mkern1mu\raise\p@
\vbox{\kern7\p@\hbox{.}}\mkern2mu
\raise4\p@\hbox{.}\mkern2mu\raise7\p@\hbox{.}\mkern1mu}}
\newtheorem{theorem}{Theorem}[section]
\newtheorem{corollary}{Corollary}[theorem]
\begin{document}
\setcounter{page}{1} 

\vspace{10mm}

\begin{center}
{\LARGE \bf Recursive eigen extrusion : Expanding eigenbasis conjecture}
\vspace{8mm}

{\Large \bf M. Hariprasad}
\vspace{3mm}

Department of Computational and Data Sciences  \\ 
Indian Institute of Science, Bangalore-560012, India \\
e-mail: \url{mhariprasadkansur@gmail.com}
\end{center}
\vspace{10mm}

\noindent
{\bf Abstract:}
Consider $n$ linearly independent vectors in $\mathbb{C}^n$ which form columns 
of a matrix $A$. The recursive evaluation of eigen directions (normalized 
eigenvectors) of $A$ is the solution of an eigenvalue problem of the form 
$A_iX_i=X_i\Lambda_i$ with $i=0,1,2 \dots$; and here $\Lambda_i$ is the diagonal 
matrix of eigenvalues and columns of $X_i$ are the eigenvectors.
Note that $A_{i+1}=\phi(X_i)$ where $\phi$ normalizes 
all eigenvectors to unit $\mathcal{L}_2$ norm such that all diagonal elements 
$[\phi(X)^\dagger\phi(X)]_{jj}=1$.  It is to be 
proven that for any matrix $A_o$ and $n \leq 7$, the limiting set of matrices 
$A_i$ with $i \to \infty$ is the set of unitary matrices $U(n)$ with 
$X_i^\dagger X_i \to I$. Interestingly, this problem also represents a 
recursive map that maximizes some average distance among a set of $n$ points on the 
unit $n$-sphere. We first formally pose this conjecture, present extensive numerical 
results highlighting it, and prove it for special cases. \\
{\bf Keywords:} Eigenvalue problems; Unitary matrices;  Fixed point theory; 
Limiting sets. \\
{\bf 2010 Mathematics Subject Classification: 	35P05, 15A18, 51F25, 58C30} 
\vspace{7mm}

\section{Introduction}
Geometrical properties of the unit $n$-sphere $\mathbb{U}^n$ ( 
\cite{akemann1970geometry},\cite{buser2010geometry}, 
\cite{bohnenblust1955geometrical} ) 
and distribution of points in its surface or volume 
(\cite{cui1997equidistribution}, \cite{katanforoush2003distributing}) 
are widely studied from theoretical and application perspective. Also, the 
structure of eigenvector matrices 
(\cite{trefethen1997numerical},\cite{hogben2006handbook}), 
its condition number (\cite{bauer1960norms}, \cite{chu1986generalization}) and 
correlation between 
its columns \cite{oja1985stochastic} are very well studied in the literature. In 
this article we study the limiting behavior of recursive evaluation of a linear 
operator and the resulting eigen directions, and more importantly its 
significant properties as a function of the dimension of space $n$. We 
conjecture that the limiting set of vectors are orthogonal only for $n \leq 7$, 
and this recursion is also equivalent to an iterative map maximizing the 
distance between $n$ points on a $n$-sphere. Along with symmetry and other 
interesting geometric properties, we note that the relative $n$-area of a sphere 
compared to a face of the larger circumscribing $n$-cube increases until $n$ increases to 
7, and decreases thereafter. Here we present a conjecture supported by numerical 
evidence, where this significant dimension $n$ =7, reappears. Note that a set of 
$n$ unit vectors have $\binom{n}{2}$ degrees of freedom given by the angles 
between them. Given this procedure of extrusion of eigen directions, the 
eigenvalue problem includes only the real and imaginary parts of the eigenvalue 
(magnitude and phase), and the norm, as the three parameters for every 
eigenvector. It should be noted that for $n\leq7$, $\binom{n}{2} \leq 3n$.

\section{Numerical observations and a conjecture}
In this section first we state the conjecture, and present the numerical 
observations. We say a property holds for almost all matrices, if for any matrix $A$ we have
matrix $A'$ such that $|A(i,j)-A'(i,j)| < \epsilon$ for any $\epsilon >0$ satisfying that property.
\newtheorem{Conjecture}{conjecture}
\begin{Conjecture}{\textbf {(Expanding Eigen basis) :}}\label{eb}
 For almost all matrices up to dimension 7, If we repeatedly construct the eigenvector matrices $X_i$, 
 each of them having columns of unit $\mathbb{L}_2$ norm, then in the limit of 
large $i$, we have $X_i^\dagger X_i \to I$.
\end{Conjecture}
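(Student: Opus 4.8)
The plan is to recast the recursion as a discrete dynamical system on the compact manifold $M$ of matrices in $\mathbb{C}^{n\times n}$ whose columns have unit $\mathcal{L}_2$ norm, driven by the map $\Psi:A\mapsto\phi(X)$ that sends $A$ to the column-normalized matrix of its eigenvectors, and to exhibit $U(n)$ as a globally attracting invariant set by a monotonicity (Lyapunov) argument. First I would pin down the fixed-point and invariance structure on the generic stratum of diagonalizable matrices with simple spectrum, which is exactly what ``almost all matrices'' buys us and where $\Psi$ is well defined and smooth once an ordering and phase convention for the eigenvectors is fixed. A short computation shows that $\Psi(A)=A$ forces every column $Ae_j$ to be an eigenvector of $A$, so that $A^2=A\,\mathrm{diag}(\mu_1,\dots,\mu_n)$ and, by invertibility of $A$, the matrix $A$ must be diagonal and unitary; hence the only genuine fixed points are the diagonal unitaries. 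More importantly, a unitary matrix is normal and therefore has orthonormal eigenvectors, so $\Psi(U(n))\subseteq U(n)$: the set $U(n)$ is invariant and is the natural candidate for the $\omega$-limit set of every generic orbit.

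Next I would introduce a scalar Lyapunov functional on $M$ measuring the failure of unitarity. The cleanest candidate is the off-diagonal Gram mass
\[
L(A)=\sum_{j\neq k}\bigl|(A^\dagger A)_{jk}\bigr|^{2}=\|A^\dagger A-I\|_F^{2},
\]
which vanishes precisely on $U(n)$ and which, up to sign and an additive constant, is the ``average spherical distance'' $\sum_{j<k}\bigl(1-|v_j^\dagger v_k|^2\bigr)$ among the column points $v_j\in\mathbb{U}^n$ referred to in the introduction; alternative monotone proxies are $\log\kappa_2(A)$ or Hadamard's defect $1-|\det A|$, both of which are zero exactly on $U(n)$ for unit-column matrices. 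The heart of the argument is to prove the one-step inequality $L(\Psi(A))\le L(A)$, with equality only on $U(n)$. Since the columns of $\Psi(A)$ are the normalized eigenvectors of $A$, the quantity $L(\Psi(A))$ measures the \emph{non-orthogonality of the eigenvectors} of $A$, so establishing this bound requires quantitative eigenvector perturbation estimates (Davis--Kahan / $\sin\Theta$ type bounds governed by the eigenvalue gaps and the departure from normality) combined with the constraint that the columns of $A$ are themselves unit eigenvectors produced at the previous step. This is exactly the point at which the dimension count enters: I would turn the heuristic $\binom{n}{2}\le 3n$ into the rigorous statement that, for $n\le 7$, the $3n$ spectral-and-normalization parameters generated at each iterate can always absorb the $\binom{n}{2}$ angular constraints, so the descent admits no non-unitary stationary configuration, whereas for $n\ge 8$ the surplus angular degrees of freedom permit non-unitary invariant sets (periodic orbits or attractors), in agreement with the numerics.

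Granting the monotone inequality, the conclusion follows by compactness of $M$ together with LaSalle's invariance principle: the decreasing bounded sequence $L(A_i)$ converges, and the $\omega$-limit set of the orbit is contained in the largest invariant subset of $\{L=0\}=U(n)$. Because $X_i=\phi(X_i)=A_{i+1}$ in the normalization of the statement, this yields $X_i^\dagger X_i=A_{i+1}^\dagger A_{i+1}\to I$ exactly as claimed; note that the orbit need not converge to a single point, only to the set $U(n)$, on which $\Psi$ keeps permuting unitaries. For the special cases I would first dispose of $n=1$ and of any normal $A_0$ (which maps into $U(n)$ in a single step), and then treat $n=2$ directly by reducing $\Psi$ to a scalar recursion on the single overlap modulus $|c|\in[0,1)$ and showing it contracts to $0$. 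I expect the genuine obstacle, and the reason only special cases are provable, to be the monotonicity step together with its sharp dependence on $n\le 7$: controlling $L(\Psi(A))$ in terms of $L(A)$ demands eigenvector bounds that degrade precisely when eigenvalue gaps shrink, and converting the parameter count into a rigorous exclusion of non-unitary limit sets is where the full conjecture resists a global proof.
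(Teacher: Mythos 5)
The statement you are addressing is presented in the paper as a \emph{conjecture}: the paper offers numerical evidence and proves only special cases (certain upper triangular families and a special $2\times 2$ class, each handled by an explicit scalar angle recursion such as $\theta_{k+1}=-\tfrac{\pi}{4}+\tfrac{\theta_k}{2}$), so there is no general proof for your proposal to be measured against. Your submission is a strategy outline rather than a proof, and its load-bearing step is not merely unproven but false. The claimed one-step descent $L(\Psi(A))\le L(A)$ fails already for $2\times 2$ upper triangular matrices: take
\begin{equation*}
A=\begin{bmatrix}1 & \tfrac{1}{2}\\[2pt] 0 & \tfrac{\sqrt{3}}{2}\end{bmatrix},
\end{equation*}
whose columns have Gram overlap $\tfrac12$, while its normalized eigenvectors are $e_1$ and $(\cos(-15^\circ),\sin(-15^\circ))^{T}$ with overlap $\cos 15^\circ\approx 0.966$; so $L$ \emph{increases} from $0.5$ to about $1.87$ in one step, even though the subsequent iterates do converge to a unitary matrix. (In general for $A=\bigl[\begin{smallmatrix}1&\cos\theta\\0&\sin\theta\end{smallmatrix}\bigr]$ the overlap goes from $|\cos\theta|$ to $|\cos\theta|/\sqrt{\cos^2\theta+(1-\sin\theta)^2}$, which exceeds $|\cos\theta|$ whenever $\sin\theta>\tfrac12$.) Moreover the paper's own loop example, the pair $\bigl[\begin{smallmatrix}1&\pm\sqrt{3}/2\\0&1/2\end{smallmatrix}\bigr]$, is a period-two orbit of non-unitary matrices on which $L$ is constant and positive, so even the weak form ``equality only on $U(n)$'' cannot hold; LaSalle's principle would then only localize the $\omega$-limit set in a set strictly larger than $U(n)$, and you would still have to show such cycles have negligible basins. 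Convergence here, where it occurs, is genuinely non-monotone in the Gram defect, so any Lyapunov functional would have to be something subtler than $\|A^\dagger A-I\|_F^2$, $\log\kappa_2$, or $1-|\det A|$.

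Two further points. First, your fixed-point computation ($\Psi(A)=A$ implies $A^2=AD$, hence $A$ diagonal unitary) silently assumes the eigenvector returned in column $j$ is associated to column $j$ as an eigenvector of $A$; with a permutation or phase reassignment one instead gets periodic orbits, which is exactly the obstruction above, and the map $\Psi$ itself is only defined up to column phases and ordering (the paper must impose quadrant/sign constraints by hand in every special case it treats). Second, the place where the dimension restriction $n\le 7$ enters your argument is announced (``I would turn the heuristic $\binom{n}{2}\le 3n$ into a rigorous statement'') but no mechanism is given; that conversion \emph{is} the content of the conjecture, and without it your argument, even if the monotonicity were repaired, would prove the statement for all $n$, contradicting the paper's numerics at $n=8$. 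What would actually be needed, and what the paper supplies only in its special cases, is a quantitative contraction estimate for the angle variables of the eigenvectors in terms of those of the columns.
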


\begin{Conjecture}{\textbf {(Expanding orthogonal similarity variant ) :}}\label{es}
  For almost all matrices, 
 up to dimension 7, If we repeatedly do the procedure of constructing the 
eigenvector matrix and similarity transformation
 with a random orthogonal (or unitary) matrix, $Y_{i} = Q_iZ_{i}Q_i^{\dagger}$, ($Z_1$ being 
eigenvector matrix of $X_1$,
 $Z_i$ being the eigenvector matrix of $Y_{i-1}$), then in the 
 limit of large $i$, we have $Y_i^\dagger Y_i \to I$.
\end{Conjecture}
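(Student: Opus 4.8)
The plan is first to strip away the random rotations and reduce the claim to a statement about the orthogonality of the eigenvector matrices alone, and then to drive that orthogonality to perfection with a monotone geometric potential. Writing $\Psi(M)$ for the normalized eigenvector matrix of $M$ (so $\Psi = \phi \circ (\text{eigenvector map})$, the operation appearing in Conjecture \ref{eb}), I would begin from the elementary but decisive identity that conjugation by a unitary is invisible to the Gram matrix up to the same conjugation: since $Y_i = Q_i Z_i Q_i^\dagger$, we have $Y_i^\dagger Y_i = Q_i (Z_i^\dagger Z_i) Q_i^\dagger$, and the Frobenius distance to the identity is unitarily invariant, so $\|Y_i^\dagger Y_i - I\|_F = \|Z_i^\dagger Z_i - I\|_F$. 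Hence $Y_i^\dagger Y_i \to I$ if and only if $Z_i^\dagger Z_i \to I$, and it suffices to treat the eigenvector matrices. Moreover the eigenvector map is equivariant, $\Psi(QMQ^\dagger) = Q\,\Psi(M)$ for unitary $Q$, which yields the recursion $Z_{i+1} = Q_i\,\Psi(Z_i)$ and the key consequence $Z_{i+1}^\dagger Z_{i+1} = \Psi(Z_i)^\dagger \Psi(Z_i)$: the random $Q_i$ rotates the frame but leaves its orthogonality content untouched.

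Next I would introduce the potential that makes the geometric picture of the abstract precise. Because every column of $Z_i$ is a unit vector, $\mathrm{tr}(Z_i^\dagger Z_i) = n$, so by Hadamard's inequality the volume functional $D_i := |\det Z_i|^2 = \det(Z_i^\dagger Z_i)$ satisfies $0 < D_i \le 1$, with $D_i = 1$ exactly when the columns form an orthonormal frame, i.e. $Z_i$ is unitary. This $D_i$ is literally the squared volume of the parallelepiped spanned by the $n$ points on the sphere, and maximizing it is one concrete reading of ``maximizing the average distance'' between them; an equivalent choice is a Riesz-type energy $E_i = \sum_{j<k} g(|\langle z_j, z_k\rangle|)$ with $g$ decreasing, extremized at the orthonormal configuration. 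From $Z_{i+1}^\dagger Z_{i+1} = \Psi(Z_i)^\dagger\Psi(Z_i)$ and $|\det Q_i| = 1$ we get $D_{i+1} = |\det \Psi(Z_i)|^2$, so the whole problem is reduced to understanding how the volume spanned by the \emph{normalized eigenvectors} of a unit-column matrix compares to the volume spanned by its columns.

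I would then argue convergence as a stochastic Lyapunov statement. The aim is to show that $D_i$ is, in conditional expectation, nondecreasing and bounded, hence convergent almost surely, and that its one-step expected increment is bounded below by a strictly positive function of $1 - D_i$ whenever $D_i < 1$; these two facts together force $D_i \to 1$, i.e. $Z_i^\dagger Z_i \to I$, and then the first paragraph delivers $Y_i^\dagger Y_i \to I$. This is exactly where the random rotations earn their place: a Haar-random $Q_i$ makes the orientation of $Z_{i+1} = Q_i \Psi(Z_i)$ generic, so that the next eigenvalue problem almost surely has simple spectrum and a well-conditioned, non-degenerate eigenbasis, ruling out the degenerate and critical configurations at which the deterministic map could stall. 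In effect, randomization lets us prove for \emph{almost all} matrices (in the measure-theoretic sense introduced above) what would otherwise require a separate argument that the stuck set is null.

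The hard part will be twofold, and both difficulties are real. First, the one-step comparison is not a clean monotonicity: already for $2\times2$ unit-column matrices the normalized eigenvectors can span a \emph{smaller} volume than the columns when the columns are close to a degenerate (repeated-eigenvalue) unitary, so the naive inequality $D_{i+1}\ge D_i$ fails pointwise and must be replaced by a drift estimate that holds only after the randomizing step and away from degenerate spectra. Controlling this averaged drift, and identifying its zero set exactly with the non-degenerate unitary frames, is the principal analytic obstacle. Second, and more fundamentally, the threshold $n \le 7$ must enter through a classification of the stable fixed configurations: the degrees-of-freedom balance from the introduction, $\binom{n}{2} \le 3n$ precisely for $n \le 7$, is the mechanism that should forbid non-orthogonal attractors below the threshold while permitting them at $n \ge 8$. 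Converting that dimension count into a rigorous statement that the only almost-surely-stable limits are unitary matrices --- and reconciling it with the drift estimate --- is the crux of the whole conjecture, and the step I expect to resist a routine proof.
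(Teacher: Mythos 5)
You should first be clear about what you are being compared against: the statement labelled \ref{es} is a \emph{conjecture}, and the paper offers no proof of it --- only the numerical evidence of figure \ref{Cr2} and proofs of special upper-triangular cases of conjecture \ref{eb}. Measured against that, your opening reductions are correct and genuinely useful. The equivariance $\Psi(QMQ^\dagger)=Q\,\Psi(M)$, the resulting recursion $Z_{i+1}=Q_i\Psi(Z_i)$ with $Z_{i+1}^\dagger Z_{i+1}=\Psi(Z_i)^\dagger\Psi(Z_i)$, the unitary invariance giving $\|Y_i^\dagger Y_i-I\|_F=\|Z_i^\dagger Z_i-I\|_F$, and the Hadamard bound $0<\det(Z_i^\dagger Z_i)\le 1$ with equality exactly at unitary frames are all sound, and your observation that pointwise monotonicity of $D_i$ fails near degenerate spectra is confirmed by the paper's own discontinuity example in section \ref{loop}, where a matrix $\epsilon$-close to the identity has an eigenvector matrix with determinant close to zero. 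One caution on your framing: the identity $Z_{i+1}^\dagger Z_{i+1}=\Psi(Z_i)^\dagger\Psi(Z_i)$ does \emph{not} mean the $Q_i$ are dynamically inert --- $\Psi$ is applied to $Q_i\Psi(Z_i)$ at the next step, not to $\Psi(Z_i)$, which is precisely why this variant converges at a different rate from conjecture \ref{eb} in the paper's data.

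The genuine gap is that the two steps carrying the entire content of the conjecture are announced rather than executed, and you say so yourself. First, the averaged drift inequality $\mathbb{E}[D_{i+1}\mid Z_i]\ge D_i+c(1-D_i)$ is not established, and it is not obviously true: when $\Psi(Z_i)$ is nearly unitary, a Haar-random $Q_i$ places $Z_{i+1}$ near a degenerate unitary with small but positive probability, on which event $D_{i+2}$ collapses toward zero; your Lyapunov argument must show the expected gain beats the expected loss from these rare crashes, and no estimate is offered. Second, the dimension threshold never actually enters your argument. The count $\binom{n}{2}\le 3n$ for $n\le 7$ is a heuristic repeated from the paper's introduction; nothing in your proposal converts it into a classification of stable limit sets, nor explains what non-unitary attractor appears at $n=8$ that your drift estimate would otherwise exclude. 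Until at least one of these two steps is carried out, what you have is a correct reduction of conjecture \ref{es} to a drift estimate for the map $Z\mapsto Q\Psi(Z)$ --- a reasonable research programme, and more than the paper provides for this variant, but not a proof.
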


\begin{Conjecture}{\textbf {(Expanding orthogonal product variant ) :}}\label{eq}
  For almost all matrices, 
 up to dimension 7, If we repeatedly do the procedure of constructing the 
eigenvector matrix and multiply it
 with a random orthogonal (or unitary)  matrix, $Y_{i} = Q_iZ_{i}$, ($Z_1$ being eigenvector 
matrix of $X_1$,
 $Z_i$ being the eigenvector matrix of $Y_{i-1}$), then in the 
 limit of large $i$, we have $Y_i^\dagger Y_i \to I$.\\
\end{Conjecture}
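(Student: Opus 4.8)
The plan is to reduce this random dynamical system to a Lyapunov/supermartingale argument on the Gram matrix, and then to isolate the dimension $n=7$ as exactly the place where the crucial drift estimate can fail. The first observation is that because each $Q_i$ is unitary (or orthogonal), left multiplication does not alter the Gram matrix: $Y_i^\dagger Y_i = Z_i^\dagger Q_i^\dagger Q_i Z_i = Z_i^\dagger Z_i$, and since the columns of $Z_i$ are the unit-normalized eigenvectors of $Y_{i-1}$, the matrix $G_i := Y_i^\dagger Y_i$ has unit diagonal. Hence the whole problem concerns only the off-diagonal entries of $G_i$, and the target $Y_i^\dagger Y_i \to I$ is equivalent to driving the frame potential $V(Y_i) := \|G_i - I\|_F^2 = \sum_{j \neq k} |(G_i)_{jk}|^2$ to zero. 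The role of the random $Q_i$ is not to change $G_i$ itself but to scramble $Y_i = Q_i Z_i$, so that the normalized eigenbasis $Z_{i+1}$ it produces is averaged over a full orbit of rotations.

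Next I would fix the invariant structure. By Hadamard's inequality applied to a unit-diagonal positive semidefinite matrix, $\det G_i \le 1$ with equality iff $G_i = I$; equivalently $V = 0$ exactly on $U(n)$. The set $U(n)$ is invariant: a unitary matrix is normal, its eigenvectors are orthonormal, normalization preserves orthonormality, and the product $Y_i = Q_i Z_i$ of two unitaries is again unitary. So $U(n)$ is the set of fixed configurations of the Gram dynamics, and the content of Conjecture~\ref{eq} is that it is the unique attractor for almost every $A_0$.

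The core step is the drift estimate: I would try to prove $\mathbb{E}\!\left[V(Y_{i+1}) \mid \mathcal{F}_i\right] \le V(Y_i)$, with strict inequality whenever $G_i \neq I$, where $\mathcal{F}_i$ is the information available before $Q_i$ is drawn and the expectation is over the Haar-random $Q_i$. The heuristic is that diffusing the non-normal part of $Y_i$ by a random rotation makes its normalized eigenbasis, on average, better conditioned than $Z_i$. If this holds, $V(Y_i)$ is a nonnegative supermartingale, hence converges almost surely; the strict decrease off $U(n)$, together with a local linearization showing that the Jacobian of the map at a unitary fixed point is a contraction, then forces $V(Y_i) \to 0$, i.e. $Y_i^\dagger Y_i \to I$. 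The \emph{almost all matrices} hypothesis is used to discard the measure-zero set of degenerate starting points (non-diagonalizable, or with repeated eigenvalues), and the randomness of $Q_i$ ensures the orbit avoids the measure-zero bad set with probability one.

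The main obstacle is establishing the strict negativity of the drift, and this is precisely where $n=7$ must enter. Each eigenvector contributes only three real parameters to the eigenproblem (the magnitude and phase of its eigenvalue, and a norm), giving $3n$ effective degrees of freedom, whereas a configuration of $n$ unit vectors carries $\binom{n}{2}$ independent pairwise angles. When $\binom{n}{2} \le 3n$, that is $n \le 7$, the eigen-parameters over-determine the angles and I expect the drift to be provably negative away from $U(n)$; for $n \ge 8$ the surplus angular degrees of freedom should permit a positive-dimensional family of non-unitary configurations on which the drift vanishes, producing spurious attractors and breaking the argument. Turning this counting heuristic into a rigorous lower bound on $-\mathbb{E}[\Delta V]$ — presumably via a perturbative expansion of the eigenvectors of $Y_i$ about an orthonormal basis and an explicit evaluation of the leading quadratic form in the off-diagonal entries of $G_i$ — is the step I expect to be hardest, and for now I would settle for carrying it out in the special cases $n=2$ and $n=3$, where the angular and eigen-parameter counts can be matched by hand.
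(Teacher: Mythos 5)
The statement you are addressing is Conjecture 3 of the paper; the paper does not prove it (nor Conjectures 1 and 2) --- it offers numerical evidence (Figure 3) and proves only special cases of the basic recursion for particular $2\times 2$ and $3\times 3$ upper triangular matrices with hand-imposed normalizations. So your proposal must stand on its own, and it does not: the entire weight of the argument rests on the drift estimate $\mathbb{E}\left[V(Y_{i+1})\mid\mathcal{F}_i\right]\le V(Y_i)$ with strict inequality off $U(n)$, which you neither prove nor reduce to anything tractable. The quantity $V(Y_{i+1})$ depends on the normalized eigenvectors of $Q_iZ_i$, and averaging this highly nonlinear and discontinuous function over Haar-random $Q_i$ (eigenvectors are not even continuous where eigenvalues collide, as the paper's own ``loops and discontinuities'' subsection illustrates) is precisely the hard analytic content of the conjecture; labelling it ``the core step'' and deferring it leaves the proof empty. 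Even granting the weak inequality, the concluding step is incomplete: a nonnegative supermartingale converges a.s., but to force the limit to be $0$ you need the expected decrease to be bounded away from zero uniformly on each set $\{V\ge\epsilon\}$, and since $V(Y_{i+1})$ depends on $Y_i$ and not merely on $G_i$, strictness ``whenever $G_i\ne I$'' must be made uniform over the entire fibre of matrices sharing a given Gram matrix. The local linearization at a unitary fixed point that you invoke is likewise unproven, and is delicate exactly because unitary matrices are normal, so the eigenvector map is degenerate (any orthonormal basis of each eigenspace is admissible) at the purported attractor.

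The appeal to $\binom{n}{2}\le 3n$ iff $n\le 7$ is lifted from the paper's introduction, where it appears only as a suggestive remark; you do not exhibit where the number $3n$ would arise as the rank of any linear map in your drift computation, nor why a parameter deficit for $n\ge 8$ would produce configurations with vanishing drift rather than merely a slower contraction. What is correct and worth keeping is the reduction $Y_i^\dagger Y_i=Z_i^\dagger Z_i$ (so the random $Q_i$ affects only the next eigenproblem, not the current Gram matrix), the characterization of $V=0$ via Hadamard's inequality, and the invariance of $U(n)$; but beyond these observations the proposal is a research programme, not a proof, and it contains no step that advances past the conjecture itself.
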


Here, convergence of $X_i^{\dagger}X_i$ need not necessarily imply convergence of $X_i$ itself to a particular 
unitary matrix. Instead, $X_i$ tend to be more unitary with increase in $i$.
Also the conjecture is stated for almost all matrices, because when algebraic and geometric multiplicities of an eigenvalue
are different, we get a rank deficient eigenvector matrix, which can never be orthogonal.
So we exclude all matrices which give rank deficient eigenvector matrix at any iteration.
Also, there can be cases when the eigenvector matrices can get stuck into loops as shown in section \ref{loop}. 

In figure \ref{Ur} and figure \ref{Gr}, uniform and Gaussian random 
matrices are considered. 
The following MATLAB commands are executed for verifying expanding eigenbasis conjecture,
\begin{align*}
 &[u ,v ] = eig (A); \\
 &for \text{	}i = 1 : N \\
 &\text{	}	[u, v] = eig(u);
\end{align*}
Determinant of $u^\dagger u$ is plotted 
over iteration from one to $N$. This is repeated for many matrices in a single plot. \\

In figure \ref{Cr},
we plot the average (over 
many matrices for a particular iteration) 
value of $\log(-\log(\det (u ^{\dagger} u)))$ with respect to the
iteration number for expanding eigenbasis and its variant conjectures.
\begin{itemize}
\item  It can be seen that at the $k^{\text{th}}$ iteration the 
$\det(u^{\dagger}u) \approx e^{-e^{-\frac{k}{2^d}}}$ where $d+2$ is dimension of 
the 
matrix, for expanding eigen basis conjecture (figure \ref{Cr1}).
\item Convergence for low dimensions (dimension two and three) have been slowed 
down in expanding eigenvector similarity 
variant (figure \ref{Cr2}). 
\item Convergence is faster (up to dimension six, we have $\det(u^{\dagger}u) 
\approx e^{-e^{-\frac{k}{2^{d-1}}}}$ , here $d+2$ representing dimension of the 
matrix,) for expanding orthogonal product variant where eigenvector 
matrix is multiplied by an orthogonal matrix before constructing next 
eigenvector matrix (figure \ref{Cr3}).
\end{itemize}

\begin{figure}
\centering     
\subfigure[Convergence: 100 matrices are considered for each dimension]{\label{fig:a}\includegraphics[width=70mm]{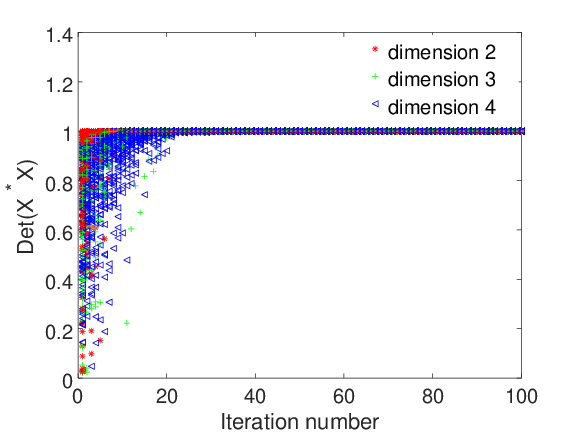}}
\subfigure[Convergence: 100 matrices are considered for each dimension]{\label{fig:b}\includegraphics[width=70mm]{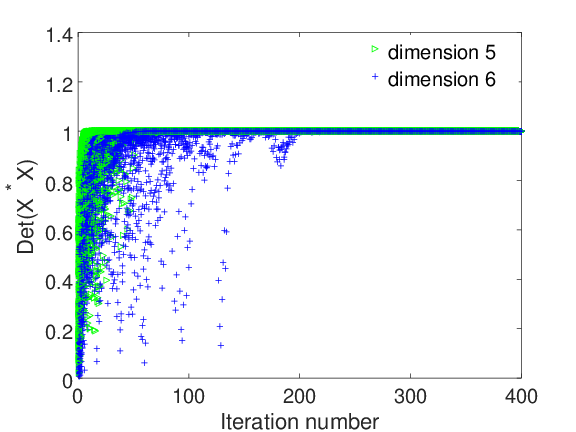}}
\subfigure[Convergence: 10 matrices are considered for each dimension]{\label{fig:c}\includegraphics[width=70mm]{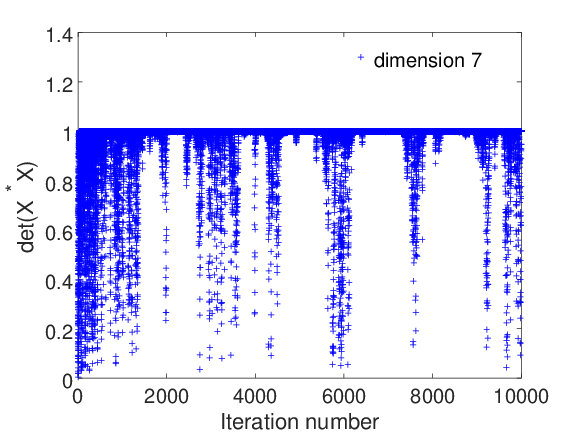}}
\subfigure[Convergence: 10 matrices are considered for each dimension]{\label{fig:d}\includegraphics[width=70mm]{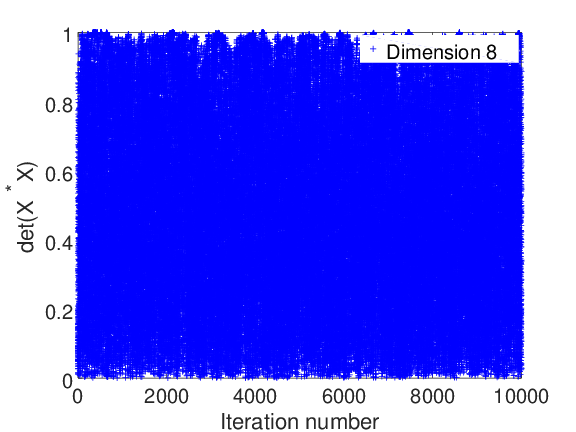}}
\caption{Uniform $\mathcal{U}(0,1)$ random matrices used for verification of the 
conjecture. 
Convergence of $\det(X_i^{\dagger}X_i)$ is observed for dimension less than 7, 
instability is observed at $n$ = 7, and no convergence is observed for a 
dimension $n$ = 8.}\label{Ur}
\end{figure}

\begin{figure}
\centering     
\subfigure[100 matrices are considered for each dimension]{\label{fig:a}\includegraphics[width=70mm]{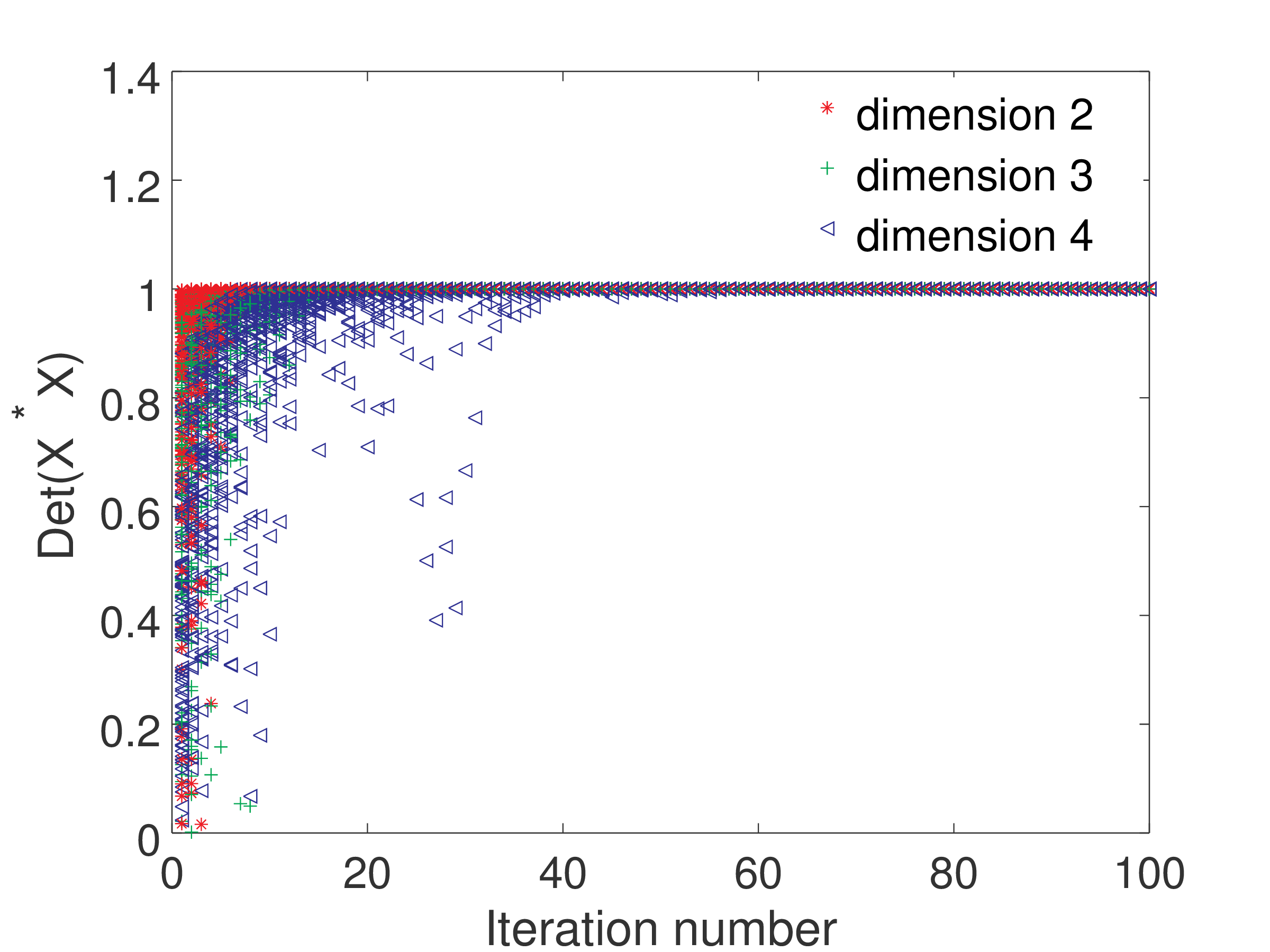}}
\subfigure[100 matrices are considered for each dimension]{\label{fig:b}\includegraphics[width=70mm]{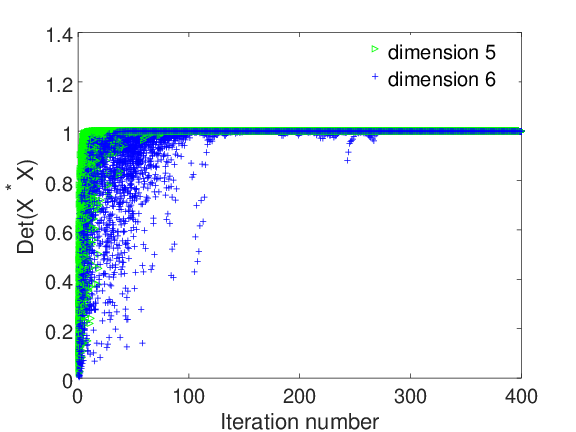}}
\subfigure[Convergence: 10 matrices are considered for each dimension]{\label{fig:c}\includegraphics[width=70mm]{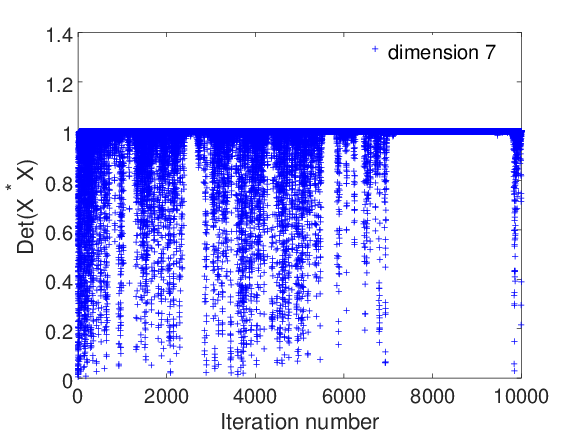}}
\subfigure[Convergence: 10 matrices are considered for each dimension]{\label{fig:d}\includegraphics[width=70mm]{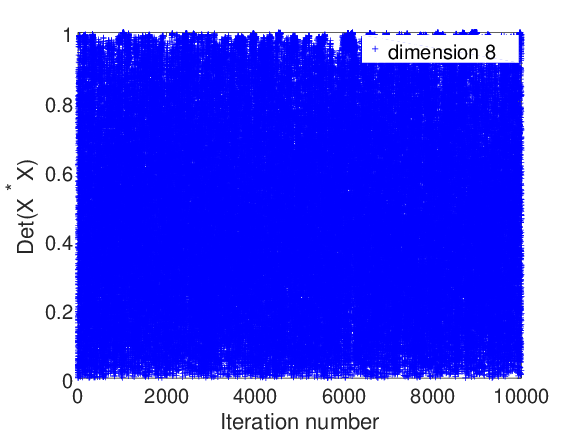}}
\caption{Convergence:  Gaussian $\mathcal{N}(0,1)$ are used random matrices for verification 
of the conjecture. Convergence is observed for dimensions less than 7, 
instability is observed at $n$ = 7, and no convergence is observed for a 
dimension $n$ = 8.}\label{Gr}
\end{figure}

\begin{figure}
\subfigure[Conjecture 1 : expanding eigenbasis]
{\includegraphics[width = 7.5 cm]{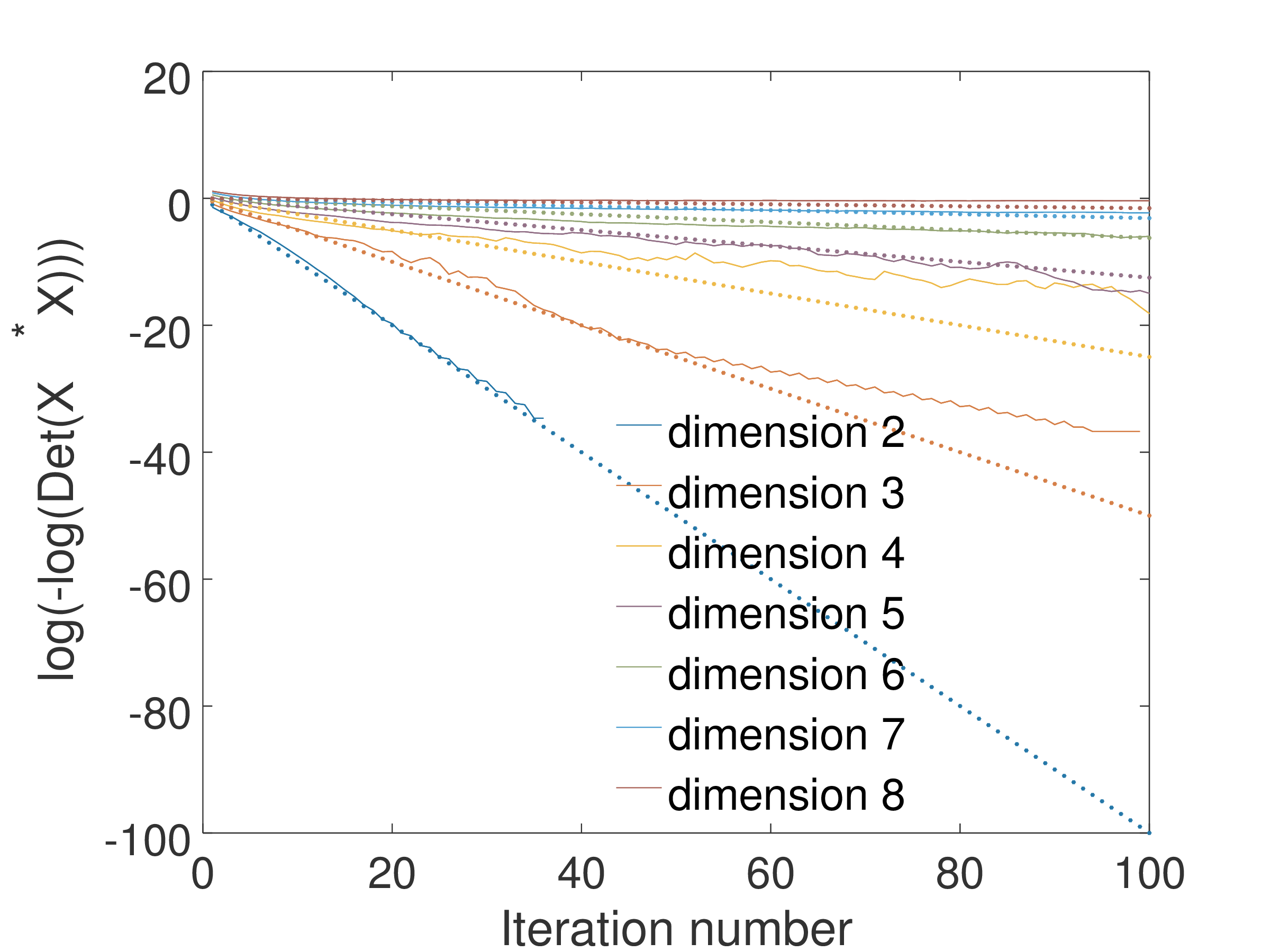}\label{Cr1}}\quad
\subfigure[Conjecture 2 :  orthogonal 
similarity variant]
{ \includegraphics[width = 7.5 cm]{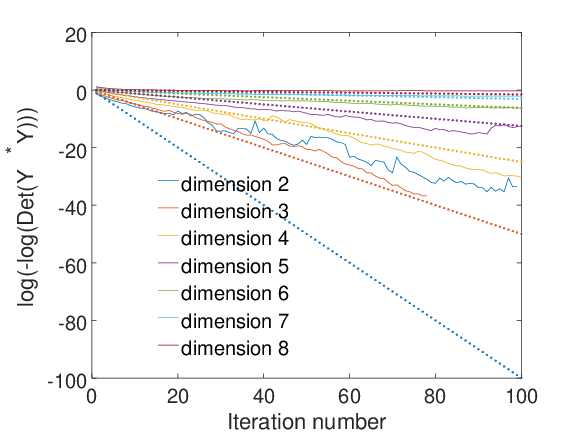}\label{Cr2}}
\centering
\subfigure[Conjecture 3 : orthogonal product variant]
{ \includegraphics[width = 7.5 cm]{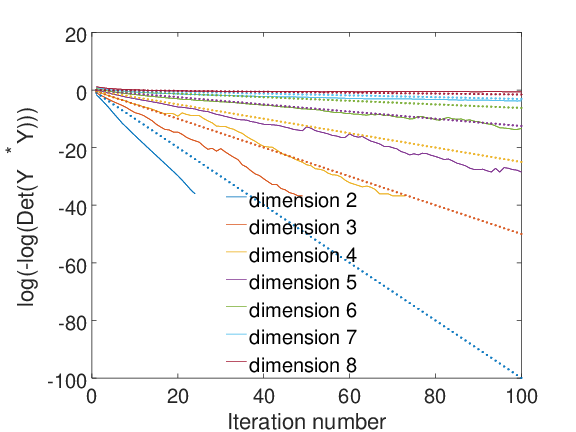}\label{Cr3}}
\caption{Average of $\log(-\log(\det(u^{\dagger}u))$ over 1000 Gaussian 
$\mathcal{N}(0,1)$ random matrices plotted with iteration number. Dotted line 
represent 
$y = -\frac{x}{2^t}$ curves for $t = 0,1,2, \cdots 7$. 
}\label{Cr}
\end{figure}

\section{Proofs: Special cases}
In this section, we consider a matrix of a particular type denoted by $A$. 
Eigenvector matrix of $A$
is denoted by $X_1$. $X_2$ denotes the eigenvector matrix of $X_1$, similarly 
$X_{i}$ denoting
the eigenvector matrix of the matrix $X_{i-1}$. Let all $X_i$ have columns of unit $\mathcal{L}_2$ norm.
We impose condition on choosing the eigenvectors, using the
fact :  if $x$ is an eigenvector of a matrix $A$ satisfying
$Ax = \lambda x$, then $\alpha x$ is also an eigenvector for $\alpha \in 
\mathbb{C}$.
By this conditioning, we force $X_i$ itself to converge to a unitary matrix.

\subsection{Upper triangular matrices}
First, we consider $2 \times 2$ upper triangular matrix of the 
form 
$ A = \begin{bmatrix}
  1 & \cos\theta \\
  0 & \sin\theta \\
 \end{bmatrix},$ when $\theta <0$ (restricted to fourth quadrant).  
 Note that the eigenvectors of the matrix $A$ can be chosen such that 
eigenvector matrix 
 is also of the same form,
 that is, $X_1 =  \begin{bmatrix}
   1&   \cos\beta \\
    0 &  \sin\beta
\end{bmatrix} $
with $\beta <0$ (restricted to fourth quadrant).
\begin{theorem}\label{T1}
For $2 \times 2$ upper triangular matrix of the form
\begin{align}
A = 
\begin{bmatrix}
  1 & \cos\theta_1 \\
  0 & \sin\theta_1 \\
 \end{bmatrix},
\end{align}
if $\theta_1 < 0$ (restricting to fourth quadrant) maintained for all the 
eigenvector matrices $X_i$, then 
$$\lim_{i \to \infty}X_i = \begin{bmatrix}
1 & 0 \\
0 & -1 
\end{bmatrix}.
$$
\end{theorem}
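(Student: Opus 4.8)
The plan is to collapse the matrix iteration into a single scalar recursion in the angle and then analyse that recursion as a one-dimensional dynamical system. First I would observe that every iterate keeps the form $X_i=\begin{bmatrix}1 & \cos\theta_i\\ 0 & \sin\theta_i\end{bmatrix}$, so the whole process is governed by the scalar sequence $s_i:=\sin\theta_i$. The first column is always the normalised eigenvector for the eigenvalue $1$, which is forced to be $(1,0)^{T}$ because $\sin\theta_i\neq 1$; hence only the second column moves. Solving for the eigenvector attached to the eigenvalue $\sin\theta_i$ gives $(-\cos\theta_i,\,1-\sin\theta_i)^{T}$, of length $\sqrt{2(1-s_i)}$. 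Imposing the fourth-quadrant convention (negate so that the sine component is negative) yields the clean recursion
\begin{equation}
s_{i+1} = -\sqrt{\frac{1-s_i}{2}}.
\end{equation}

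Next I would locate the fixed points. Setting $s=-\sqrt{(1-s)/2}$ and squaring gives $2s^2+s-1=(2s-1)(s+1)=0$, so $s\in\{\tfrac12,\,-1\}$. Since the right-hand side of the recursion is manifestly non-positive, the root $s=\tfrac12$ is spurious and only $s=-1$ is admissible. The value $s=-1$ corresponds to $\theta=-\pi/2$, i.e. $\cos\theta=0$ and $\sin\theta=-1$, which is exactly the target matrix $\begin{bmatrix}1 & 0\\ 0 & -1\end{bmatrix}$. It then remains to prove that every admissible orbit converges to this point.

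For convergence I would track the error $\varepsilon_i:=1+s_i\ge 0$, the distance to the fixed point. Rationalising the numerator gives
\begin{equation}
\varepsilon_{i+1} = 1-\sqrt{1-\tfrac{\varepsilon_i}{2}} = \frac{\varepsilon_i/2}{1+\sqrt{1-\varepsilon_i/2}} \le \frac{\varepsilon_i}{2}.
\end{equation}
One checks that the map sends $[-1,0)$ into itself, with image $[-1,-1/\sqrt2\,]$, so $s_i\in[-1,0)$ for every $i$ and $\varepsilon_i\in[0,1)$; the denominator then exceeds $1+1/\sqrt2$, improving the factor to $(2+\sqrt2)^{-1}$ and giving geometric decay $\varepsilon_i\to 0$. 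Equivalently, one may simply bound $|f'(s)|=(2\sqrt2\,\sqrt{1-s})^{-1}\le 1/(2\sqrt2)<1$ on $[-1,0]$ and invoke the contraction mapping theorem on this invariant interval. Either route yields $s_i\to-1$, hence $\cos\theta_i\to 0$ and $\sin\theta_i\to-1$, so $X_i\to\begin{bmatrix}1 & 0\\ 0 & -1\end{bmatrix}$ as claimed.

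The step I would be most careful about — and the only genuinely delicate one — is the sign and branch bookkeeping in deriving the recursion. The quadratic carries the spurious positive root $s=\tfrac12$, and I must verify that the fourth-quadrant normalisation convention consistently selects the negative branch at every iteration, so that the orbit truly flows to $s=-1$ rather than being deflected toward the wrong root. Once the recursion is pinned down with the correct sign, the dynamical part is routine, since the induced map is a strict contraction on the invariant interval, leaving no room for alternative limiting behaviour.
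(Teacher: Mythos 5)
Your proof is correct. It follows the same overall strategy as the paper --- collapse the matrix iteration to a scalar recursion for the moving second column and show convergence to the fixed point --- but you parametrize by $s_i=\sin\theta_i$ whereas the paper works directly with the angle $\theta_i$. In the angle variable, the paper's identity $\cos\theta_{i+1}=\sin(\theta_i-\theta_{i+1})$ together with the fourth-quadrant restriction yields the \emph{affine} recursion $\theta_{i+1}=-\tfrac{\pi}{4}+\tfrac{\theta_i}{2}$, which it solves in closed form as a geometric series summing to $-\pi/2$; your recursion $s_{i+1}=-\sqrt{(1-s_i)/2}$ is exactly the image of that affine map under $\sin$, via $1-\sin\theta=\bigl(\cos(\theta/2)-\sin(\theta/2)\bigr)^2$. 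What your route buys is a more careful convergence argument: the invariant interval, the elimination of the spurious root $s=\tfrac12$, and the explicit estimate $\varepsilon_{i+1}\le\varepsilon_i/2$ make the limit airtight, whereas the paper's series computation quietly drops the decaying $\theta_1/2^{i-1}$ contribution (harmless, but unstated). What the paper's route buys is simplicity: once the sine is inverted on $(-\pi/2,\pi/2)$ using the quadrant restriction, the recursion is linear and there is no branch or sign bookkeeping left to do --- precisely the step you flag as the delicate one in your version.
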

\begin{proof} 
We can see that 
$e1  = 
\begin{bmatrix} 
1\\
 0 
\end{bmatrix}$ 
 is an eigenvector. Let the second eigenvector be
$v = 
\begin{bmatrix}
      \cos\theta_2 \\
      \sin\theta_2
\end{bmatrix}
$.
Then we have 
\begin{align}
& Av = \sin(\theta_1)v \\
& \begin{bmatrix}
  1 & \cos\theta_1 \\
  0 & \sin\theta_1 \\
 \end{bmatrix}
 \begin{bmatrix}
      \cos\theta_2 \\
      \sin\theta_2
\end{bmatrix}
= 
\sin \theta_1
\begin{bmatrix}
      \cos\theta_2 \\
      \sin\theta_2
\end{bmatrix}.
\end{align}
From this relation we get,
\begin{align}
 \cos \theta_2 + \cos \theta_1 \sin \theta_2 = \sin \theta_1 \cos \theta_2 \\
 \cos \theta_2 = \sin (\theta_1 - \theta_2). \label{trig1}
\end{align}
We have $\theta_2$ in the fourth quadrant, so we get 
\begin{align}
 \sin \left( \frac{\pi}{2} + \theta_2 \right ) = \sin (\theta_1 - \theta_2) \\
 \theta_2 = \frac{- \pi }{4} + \frac{\theta_1}{2}. 
\end{align}

When we repeat the procedure, let $\theta$ be the angle corresponding to 
limiting eigenvector, we have
\begin{align}
 \theta &= \frac{ -\pi }{2} \left( \frac{1}{2} + \frac{1}{4} + \frac{1}{8} + 
\frac{1}{16} + \frac{1}{32} \cdots \right)\\
 \theta &= \frac{ -\pi }{2} .
\end{align}
Therefore, in the limit of repeated extrusion of the eigenvectors, the upper 
triangular matrix converges to the diagonal matrix, which is unitary. 
\end{proof}

\begin{theorem}
For a $n \times n$ real upper triangular matrix, \\ 
$A = \begin{bmatrix}
  1 & 0 & 0 & \cdots & x_1 \\
  0 & 1 & 0 & \cdots & x_2 \\
  0 & 0 & \ddots& \vdots& \vdots \\
  0 & 0 &  \cdots& 1 & x_{n-1} \\
  0 & 0 & \cdots & 0 & x_n
 \end{bmatrix}
$  with $\sum_{i=1}^{n} x_i^2 = 1$. and $x_i \geq 0$ for $1 \leq i  < n$ and 
$x_n < 0$. If we impose conditions 
$\sum_{i=1}^{n} y_i^2 = 1$. and $y_i \geq 0$ for $1 \leq i  < n$ and $y_n < 0$
on the eigenvector satisfying,
\begin{align}
   \begin{bmatrix}
  1 & 0 & 0 & \cdots & x_1 \\
  0 & 1 & 0 & \cdots & x_2 \\
  0 & 0 & \ddots& \vdots& \vdots \\
  0 & 0 &  \cdots& 1 & x_{n-1} \\
  0 & 0 & \cdots & 0 & x_n
 \end{bmatrix}
 \begin{bmatrix}
   y_1\\
   y_2\\
  \vdots\\
  y_{n-1}\\
  y_n
\end{bmatrix}
 = 
 x_n 
\begin{bmatrix}
   y_1\\
   y_2\\
  \vdots\\
  y_{n-1}\\
  y_n
\end{bmatrix}.
\end{align}

Then in the process of repeated extrusion of eigenvectors, \\
limiting eigenvector matrix is,
 $ \begin{bmatrix}
  1 & 0 & 0 & \cdots & 0 \\
  0 & 1 & 0 & \cdots & 0 \\
  0 & 0 & \ddots& \vdots& \vdots \\
  0 & 0 &  \cdots& 1 & 0 \\
  0 & 0 & \cdots & 0 & -1
 \end{bmatrix}$. 
\end{theorem}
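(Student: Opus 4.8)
The plan is to exploit the special sparsity of $A$ to show that the whole high-dimensional recursion collapses to a one-dimensional map on the single parameter $x_n$. First I would record the spectrum. Since $A$ is upper triangular with diagonal $(1,\dots,1,x_n)$ and $x_n<0$, the eigenvalue $1$ has algebraic multiplicity $n-1$; because the only off-diagonal coupling sits in the last column, the kernel of $A-I$ is exactly $\operatorname{span}\{e_1,\dots,e_{n-1}\}$, so the geometric multiplicity equals $n-1$ and $A$ is diagonalizable (it stays in the admissible, non-rank-deficient class throughout the iteration). I would therefore take the canonical choice $e_1,\dots,e_{n-1}$ for the eigenspace of $1$ as the first $n-1$ columns of the eigenvector matrix, which already reproduces the prescribed identity block.

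The key step is to determine the last column, the normalized eigenvector $v$ for the simple eigenvalue $x_n$. Solving $(A-x_nI)v=0$ row by row gives $v_i=-\frac{x_i}{1-x_n}v_n$ for $i<n$, and imposing the sign conventions ($v_i\ge 0$ for $i<n$, $v_n<0$) together with $\sum v_i^2=1$ pins $v_n$ down uniquely. Using the single simplification $\sum_{i<n}x_i^2 = 1-x_n^2$, the normalization collapses neatly to $v_n=-\sqrt{(1-x_n)/2}$. The crucial observation is that the resulting eigenvector matrix has exactly the same structural form as $A$ (identity block plus a unit last column obeying the same sign constraints), so the recursion is closed and is entirely governed by the scalar update
\begin{equation}
a_{k+1}=g(a_k),\qquad g(a)=-\sqrt{\tfrac{1-a}{2}},
\end{equation}
where $a_k$ denotes the value of $x_n$ at the $k$-th iteration, with $a_0\in[-1,0)$.

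It then remains to analyze this scalar recursion, generalizing the $2\times 2$ computation of Theorem~\ref{T1}. I would first locate the fixed points by solving $a=-\sqrt{(1-a)/2}$; squaring yields $2a^2+a-1=(2a-1)(a+1)=0$, so $a=\tfrac12$ or $a=-1$, and only $a^{*}=-1$ lies in the admissible range $a<0$. To obtain convergence I would check that $g$ maps $[-1,0]$ into itself (it is increasing, with $g(-1)=-1$ and $g(0)=-1/\sqrt2$) and that it is a contraction there, since $|g'(a)|=\tfrac14\sqrt{2/(1-a)}\le \sqrt2/4<1$ on the interval. The Banach fixed point theorem then forces $a_k\to-1$ for every admissible starting value, hence $x_i\to 0$ for all $i<n$, and the eigenvector matrix converges to $\operatorname{diag}(1,\dots,1,-1)$.

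The main obstacle is the middle step: verifying that the eigenvector extraction genuinely preserves the prescribed structure, so that the full map truly reduces to the scalar recursion above. Once that reduction and the identity $\sum_{i<n}x_i^2=1-x_n^2$ are in hand, the remaining convergence argument is a routine contraction estimate, and the sign and normalization bookkeeping (ensuring the fourth-quadrant-type conditions persist at every iteration) is the only place where care is required.
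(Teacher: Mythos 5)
Your proposal is correct, and its core reduction coincides with the paper's: both arguments observe that the eigenvector equation forces $y_i \propto x_i$ for $i<n$ (the paper's relation $x_iy_n=y_i(x_n-1)$, your $v_i=-x_iv_n/(1-x_n)$), so the direction of the last column within the first $n-1$ coordinates is frozen and the entire iteration is governed by a single scalar. Where you genuinely differ is in how that scalar recursion is analyzed. The paper parametrizes the last column by spherical angles, shows that the recursion for the final angle reduces to exactly the $2\times 2$ relation $\cos\theta_2=\sin(\theta_1-\theta_2)$ of Theorem \ref{T1}, i.e.\ the affine map $\theta\mapsto\theta/2-\pi/4$, and concludes by summing the geometric series to get $\theta\to-\pi/2$. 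You instead keep the algebraic variable $a=x_n$, derive the explicit update $g(a)=-\sqrt{(1-a)/2}$ (which is the same map under the substitution $a=\sin\theta$), and close with a contraction estimate $|g'(a)|\le\sqrt{2}/4$ on $[-1,0]$ together with the Banach fixed point theorem. Your route buys a self-contained, quantitative convergence proof that does not lean on the somewhat informal limit computation in Theorem \ref{T1}, and it makes the persistence of the sign and normalization constraints completely explicit (the identity $\sum_{i<n}x_i^2=1-x_n^2$ gives $v_n=-\sqrt{(1-x_n)/2}$ in closed form, so the structural class is visibly invariant); the paper's route buys an exact closed-form solution of the recursion and keeps the $n$-dimensional case formally parallel to its $2\times 2$ prototype. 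Both arguments are valid and reach the same limit $\operatorname{diag}(1,\dots,1,-1)$.
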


\begin{proof}
Let
$X_1 = 
\begin{bmatrix}
   1 & 0 & 0 & \cdots & y_1 \\
  0 & 1 & 0 & \cdots & y_2 \\
  0 & 0 & \ddots& \vdots& \vdots \\
  0 & 0 &  \cdots& 1 & y_{n-1} \\
  0 & 0 & \cdots & 0 & y_n
\end{bmatrix}
$ be the eigenvector  matrix of $A$.

So we have the relation, 
\begin{align}
  \begin{bmatrix}
  1 & 0 & 0 & \cdots & x_1 \\
  0 & 1 & 0 & \cdots & x_2 \\
  0 & 0 & \ddots& \vdots& \vdots \\
  0 & 0 &  \cdots& 1 & x_{n-1} \\
  0 & 0 & \cdots & 0 & x_n
 \end{bmatrix}
 \begin{bmatrix}
   y_1\\
   y_2\\
  \vdots\\
  y_{n-1}\\
  y_n
\end{bmatrix}
 = 
 x_n 
\begin{bmatrix}
   y_1\\
   y_2\\
  \vdots\\
  y_{n-1}\\
  y_n
\end{bmatrix}.
\end{align}
This implies ,
\begin{align}
 x_i y_n & = y_i (x_n-1) \text{  for  $i = 1,2, \cdots ,{n-1}$}. \label{eigr1} 
\end{align}
Which means,

\begin{align}
 \frac{x_i}{x_j}& = \frac{y_i}{y_j} \text{  for   $ i, j \in  \{1,2, \cdots 
,{n-1}\}$}. \label{ratio1} 
\end{align}

Let us denote $\cos \theta_i$ by $c_i$ and $\sin \theta_i$ by $s_i$,
then we have eigenvector of  $A$ in the form,
$
\begin{bmatrix}
c_1c_2 \cdots c_n \\
s_1c_2c_3 ...c_n \\
s_2 c_3 \cdots c_n \\
\vdots\\
s_n
\end{bmatrix}$. Because of the constraints we can represent last column of $A$ 
by    
$
\begin{bmatrix}
c_1c_2 \cdots c_n \\
s_1c_2c_3 ...c_n \\
s_2 c_3 \cdots c_n \\
\vdots\\
s_n
\end{bmatrix}$ and eigenvector by $
\begin{bmatrix}
c_{n+1}c_{n+2} \cdots c_{2n} \\
s_{n+1}c_{n+2}c_{n+3} \cdots c_{2n} \\
s_{n+2} c_{n+3} \cdots c_{2n} \\
\vdots\\
s_{2n}
\end{bmatrix}$. Then from the relation \eqref{ratio1} and the positivity of 
$x_i$ and $y_i$, we have $ \theta_i = \theta_{i+n}$ for $1 \leq i <n$.

From the equation \eqref{eigr1} we get,
\begin{align}
 s_{2n-1}c_{2n} +  s_{n-1} c_n s_{2n} &= s_n s_{2n-1} c_{2n} \\
 c_{2n} &= s_n c_{2n} - c_ns_{2n} \label{trig3}
\end{align}

Note that equation \eqref{trig3} is same as equation \eqref{trig1}, so from the 
arguments as in theorem \ref{T1},
we get the limiting eigenvector matrix,
 $ \begin{bmatrix}
  1 & 0 & 0 & \cdots & 0 \\
  0 & 1 & 0 & \cdots & 0 \\
  0 & 0 & \ddots& \vdots& \vdots \\
  0 & 0 &  \cdots& 1 & 0 \\
  0 & 0 & \cdots & 0 & -1
 \end{bmatrix}$.

\end{proof}

\subsubsection{Loops and discontinuities}\label{loop}
While selecting the condition on the eigenvectors to make $X_i$ itself converge to an
orthogonal matrix, we may get into two difficult scenarios.
\begin{itemize}
\item In general, for upper triangular matrices, if we don't have the restriction on 
the eigenvectors, the procedure of repeated extrusion of eigenvectors
may get into a loop. For example, the matrices 
$
\begin{bmatrix}
1 & \frac{\sqrt{3}}{2} \\
0 &\frac{1}{2} 
\end{bmatrix}
$ and the matrix  
$\begin{bmatrix}
1 & \frac{-\sqrt{3}}{2} \\
0 & \frac{1}{2} 
\end{bmatrix}
$ are the eigenvector matrices of each other.\\
\item Discontinuity: Consider a $2 \times 2$ upper triangular matrix 
$\begin{bmatrix}
  1 & \epsilon \\
  0 & \sqrt{1-\epsilon^2}
 \end{bmatrix}
$ for a small $\epsilon >0$ we can see that its eigenvector  $\begin{bmatrix}
                                                               x \\
                                                               y
                                                              \end{bmatrix}
$
corresponding to eigenvalue $\sqrt{1-\epsilon^2}$, then as $\epsilon \to 0$, we 
have $y \to 0$ and $x \to 1$. So we have the following

 $\begin{bmatrix}
  1 & \epsilon \\
  0 & \sqrt{1-\epsilon^2}
 \end{bmatrix}
$ matrix has eigenvector matrix 
 $\begin{bmatrix}
  1 & \pm \sqrt{1-\delta^2} \\
  0 & \delta
 \end{bmatrix}
$ for small $\epsilon ,\delta > 0$.
\end{itemize}
\subsubsection{$3 \times 3 $ upper triangular matrices}
\begin{theorem}\label{T3}
 For a $3 \times 3$ upper triangular matrix of the form $A = \begin{bmatrix}
                                                              1 & 0 & a \\
                                                              0 & -1 & b \\
                                                              0 & 0 & ci 
                                                             \end{bmatrix}
$ where $|a|^2 + |b|^2 + |c|^2 = 1$ and $c \in \mathbb{R},$ and $a,b \in 
\mathbb{C}$  if we maintain the 
constraint to the eigenvector matrix
$X_1 = \begin{bmatrix}
                                                              1 & 0 & x \\
                                                              0 & -1 & y \\
                                                              0 & 0 & zi 
                                                             \end{bmatrix},
$ $|x|^2 + |y|^2 + |z|^2 = 1$ and $z \in \mathbb{R},$ and $x,y \in \mathbb{C}$ 
in the limit of repeatedly taking eigenvectors, 
we get the matrix
$\begin{bmatrix}
                                                              1 & 0 & 0 \\
                                                              0 & -1 & 0 \\
                                                              0 & 0 & i 
                                                             \end{bmatrix}$.

\end{theorem}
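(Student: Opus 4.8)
The plan is to exploit the fact that the three-parameter family of matrices in the statement is closed under the eigenvector-extrusion map, and then to collapse the whole matrix recursion onto a single scalar recursion for the real corner parameter $c$, in the same spirit as the reduction used in Theorem~\ref{T1}.

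First I would compute the eigenvectors of $A$ explicitly. Since $A$ is upper triangular its eigenvalues are $1,-1,ci$; the eigenvectors for $1$ and $-1$ are $e_1$ and $-e_2$, which already account for the first two columns of $X_1$. Solving $(A-ci\,I)v=0$ gives $v_1=-\tfrac{a}{1-ci}v_3$ and $v_2=\tfrac{b}{1+ci}v_3$. Using the scalar freedom $v\mapsto\alpha v$ I would fix $\alpha$ so that the last coordinate is purely imaginary, i.e.\ set $v_3=zi$ with $z\in\mathbb{R}$, obtaining $x=-\tfrac{azi}{1-ci}$ and $y=\tfrac{bzi}{1+ci}$. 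Then $X_1$ again lies in the same family, now with parameters $(x,y,z)$, and this structural closure is what allows the recursion to iterate indefinitely.

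The key computational step is the normalization. Using $|1\mp ci|^2=1+c^2$, imposing unit $\mathcal{L}_2$ norm on the last column gives
\begin{align}
z^2\!\left(\frac{|a|^2+|b|^2}{1+c^2}+1\right)=1,
\end{align}
and substituting the constraint $|a|^2+|b|^2=1-c^2$ collapses this to the strikingly simple relation $z^2=\tfrac{1+c^2}{2}$. Writing $u_k=c_k^2$ for the corner parameter at the $k$-th iterate, the matrix map therefore induces the affine scalar recursion $u_{k+1}=\tfrac{1+u_k}{2}$, whose unique fixed point $u^{\ast}=1$ is attracting since $u_{k+1}-1=\tfrac12(u_k-1)$, so that $u_k-1=(u_0-1)\,2^{-k}\to 0$. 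Hence $c_k^2\to 1$; choosing the positive real root (the sign convention that pins the limit to $+i$ rather than $-i$) gives $c_k\to 1$, while $|a_k|^2+|b_k|^2=1-c_k^2\to 0$ forces $a_k,b_k\to 0$. Therefore $X_k\to\mathrm{diag}(1,-1,i)$, as claimed.

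The main obstacle is not the convergence, which is immediate once the scalar map is in hand, but the structural closure: one must verify that the phase of the $ci$-eigenvector can always be chosen so that its last coordinate is purely imaginary, equivalently that the new corner parameter $z$ is real, so that each iterate $X_k$ stays inside the three-parameter family and the recursion on $c$ is well defined at every step. A secondary point is fixing the sign conventions (the positive root for $z$) consistently across iterations so that the limit is exactly $\mathrm{diag}(1,-1,i)$; the degenerate loop and discontinuity phenomena flagged in Section~\ref{loop} do not arise here, since the eigenvalues $1,-1,ci$ remain distinct throughout the iteration.
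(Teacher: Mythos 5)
Your proposal is correct, and it follows the same overall strategy as the paper: establish that the three-parameter family is closed under the extrusion map and reduce everything to a one-dimensional recursion for the corner parameter. The execution of the key step differs, though, in a way worth noting. The paper first forms the ratio of the two component equations to get $|a|/|b|=|x|/|y|$, introduces a spherical-angle parametrization of the last column, and then multiplies the equations to arrive at $\tan^2\theta_4 = 1+2\tan^2\theta_2$, concluding via the divergent series $1+2+4+\cdots$ that the angle tends to $\pi/2$. You instead sum $|x|^2+|y|^2=\frac{(|a|^2+|b|^2)z^2}{1+c^2}$ and impose the unit-norm constraint directly, obtaining $z^2=\frac{1+c^2}{2}$; this is exactly the paper's recursion rewritten with $c=\sin\theta_2$, $z=\sin\theta_4$, but it arrives as a contractive affine map with an attracting fixed point rather than a divergent one, so you get the geometric convergence rate $|c_k^2-1|=2^{-k}|c_0^2-1|$ for free, you bypass the ratio step $\theta_1=\theta_3$ entirely (the sum $|x|^2+|y|^2$ never needs the individual magnitudes), and you avoid tracking the phases $\alpha+\beta=\gamma+\delta$ since $|a_k|^2+|b_k|^2\to 0$ makes them irrelevant. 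Your explicit observation that $z^2\geq\frac12>0$ at every step, so the purely-imaginary normalization of the last coordinate is always available and the eigenvalues stay distinct, also makes the structural-closure claim airtight in a way the paper leaves implicit.
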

\begin{proof}
 We have a relation with eigenvectors,
 \begin{align}
 A = \begin{bmatrix}
1 & 0 & a \\
0 & -1 & b \\
0 & 0 & ci 
\end{bmatrix}   
\begin{bmatrix}
 x \\
 y \\
zi 
\end{bmatrix}
= 
ci \begin{bmatrix}
 x \\
 y \\
zi 
\end{bmatrix}.
\end{align}
From this relation we get,

\begin{align}
 x + azi &= cix \\
 -y + bzi &= ci y.
\end{align}
Rearranging,
\begin{align}
 azi &= x (ci-1), \label{er1}\\
 bzi &= y (ci+1)\label{er2}.
\end{align}
Using the ratio of equations \eqref{er1} and \eqref{er2},

\begin{align}
 \frac{a}{b} &= \frac{x}{y}\frac{ci-1}{ci+1} \label{t1r}
\end{align}

Note $ -1i \leq ci \leq 1i $ so $\frac{ci-1}{ci+1}$ is a point on the unit 
circle, so using the magnitude in \eqref{t1r},
we get
\begin{align}
 \frac{|a|}{|b|} &= \frac{|x|}{|y|} \label{t2r}
\end{align}

If we represent vector $\begin{bmatrix} 
a \\
b\\
ci
\end{bmatrix}$ as 
$\begin{bmatrix} 
\cos \theta_1 \cos \theta_2 e^{i \alpha} \\
\sin \theta_1 \cos \theta_2 e^{i \beta}\\
\sin \theta_2 i
\end{bmatrix}$, and vector $\begin{bmatrix}
 x \\
 y \\
zi 
\end{bmatrix}$
as 
$\begin{bmatrix} 
\cos \theta_3 \cos \theta_4 e^{i \gamma} \\
\sin \theta_3 \cos \theta_4 e^{i \delta}\\
\sin \theta_4 i
\end{bmatrix}$.

Then we have from the relation \eqref{t2r}, $\tan \theta_1 =\tan \theta_3$. Thus 
we have $\theta_1 = \theta_3$.

Multiplying the equations  \eqref{er1} and \eqref{er2}, the angles are given by
$\alpha+\beta = \gamma + \delta$. Also using the magnitude and the fact 
$\theta_1 = \theta_3$, we get

\begin{align}
 \cos^2 \theta_2 \sin^2 \theta_4 &= \cos^2 \theta_4 (1 + \sin^2 \theta_2) \\
 \tan^2 \theta_4 &= 1 + 2 \tan^2 \theta_2 \label{limitr}
\end{align}

In the limit of repeated extrusion of eigenvectors \eqref{limitr} gives 
\begin{align}
 tan^2 \theta = 1+ 2+ 4+ 8 + 16  \cdots 
\end{align}

So the limiting eigenvector matrix is,
$\begin{bmatrix}
1 & 0 & 0 \\
 0 & -1 & 0 \\
0 & 0 & i 
\end{bmatrix}$.
\end{proof}

\begin{corollary}
 For a real upper triangular matrix of dimension 3, while recursive evaluation 
eigenvector matrices, first constraining the second eigenvector as in theorem 
\ref{T1} we obtain $\begin{bmatrix}
          1 & \epsilon & a \\
          0 & -1 + \delta & b \\
          0 & 0 & c 
         \end{bmatrix}
$ for small $\epsilon,\delta >0$ ,  then constraining the third column as in 
theorem \ref{T3} 
along with the second column, gives  
 $\begin{bmatrix}
          1 & 0 & 0 \\
          0 & -1  &0\\
          0 & 0 & i
         \end{bmatrix} $ as  a limiting eigenvector matrix.
\end{corollary}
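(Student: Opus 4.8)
The plan is to exploit the triangular, skew-product structure of the recursion on $3\times 3$ upper triangular matrices, decoupling the dynamics of the leading $2\times 2$ block from that of the third column. First I would record two invariants of the map $X\mapsto$ (normalized eigenvector matrix of $X$): upper-triangularity is preserved, and since the $(1,1)$ entry is always $1$, the first eigenvector is always $e_1=[1,0,0]^{\mathsf T}$. Consequently the second eigenvector lies in $\mathrm{span}\{e_1,e_2\}$, and for an upper triangular matrix the eigenvector attached to the $k$-th eigenvalue only involves the leading $k\times k$ block; hence the first two columns of $X_{i+1}$ depend solely on the leading $2\times 2$ block of $X_i$, independently of $a,b,c$. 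This is the crucial structural point: the leading block evolves autonomously and is governed verbatim by Theorem \ref{T1}, so its angle obeys $\theta\mapsto -\pi/4+\theta/2$ and the block converges to $\mathrm{diag}(1,-1)$. After finitely many iterations this produces exactly the intermediate form $\left[\begin{smallmatrix}1&\epsilon&a\\0&-1+\delta&b\\0&0&c\end{smallmatrix}\right]$ with $\epsilon,\delta$ arbitrarily small, which is the first assertion of the corollary.

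Second, with the leading block driven to $\mathrm{diag}(1,-1)$, I would analyze the driven third-column dynamics. The third eigenvector $v=[v_1,v_2,v_3]^{\mathsf T}$ satisfies $v_1=\tfrac{a}{\lambda-1}v_3$ and $v_2=\tfrac{b}{\lambda+1}v_3$, where $\lambda$ is the third eigenvalue; the gaps $\lambda-1$ and $\lambda+1$ are precisely the combinations entering Theorem \ref{T3}. The key step is to show that, once the block has locked onto the diagonal values $1$ and $-1$, the recursion for the elevation angle $\theta_4$ of the third column reduces exactly to \eqref{limitr}, namely $\tan^2\theta_4=1+2\tan^2\theta_2$, so that Theorem \ref{T3} applies directly and carries the third column to $[0,0,i]^{\mathsf T}$. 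Iterating $t\mapsto 1+2t$ drives $\tan^2\theta\to\infty$, giving the limiting matrix $\mathrm{diag}(1,-1,i)$.

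The hard part will be justifying the hand-off between the two stages rigorously rather than heuristically. Because both columns are updated simultaneously, one must control the third-column recursion while the leading block is only \emph{approximately} $\mathrm{diag}(1,-1)$ (i.e.\ $\epsilon,\delta\neq 0$); this requires a perturbation estimate showing that the small couplings $\epsilon,\delta$ do not derail the contraction encoded in \eqref{limitr}. The genuinely delicate issue is the transition from the real starting matrix to the purely imaginary limiting eigenvalue $i$: for a real upper triangular matrix all eigenvalues and eigenvectors are real, so the appearance of $i$ must be explained through the discontinuity phenomenon of \S\ref{loop}, where the factor $\tfrac{\lambda-1}{\lambda+1}$ is forced onto the unit circle and the third eigenvalue migrates from the real axis toward the imaginary axis. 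I would also flag the ill-conditioning that arises if the third eigenvalue $c$ approaches $-1$, so that the second and third eigenvalues nearly coincide and the eigenvector matrix becomes nearly rank-deficient; this is exactly the exceptional configuration excluded by the ``almost all matrices'' hypothesis, and so generically it can be avoided.
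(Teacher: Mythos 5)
Your two-stage plan --- let the leading $2\times 2$ block evolve autonomously under Theorem \ref{T1}, then hand the third column off to the recursion of Theorem \ref{T3} --- is exactly the argument the corollary is meant to encode (the paper states the corollary without proof, as a direct chaining of the two theorems), and your observation that for an upper triangular matrix the $k$-th eigenvector depends only on the leading $k\times k$ block is the right structural justification for the decoupling. You are also right that the hand-off is the genuinely unproved step: after finitely many iterations the leading block is only approximately $\mathrm{diag}(1,-1)$, Theorem \ref{T3} as stated requires the diagonal to be exactly $(1,-1,ci)$, and neither you nor the paper supplies the perturbation estimate showing that the contraction $\tan^2\theta_4=1+2\tan^2\theta_2$ survives the $O(\epsilon,\delta)$ couplings.

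Where your account goes wrong is the origin of the imaginary unit. For a real upper triangular matrix the third eigenvalue is the real number $c$ and the third eigenvector can be taken real, so the iteration would remain real forever; nothing in the dynamics ``migrates'' the eigenvalue toward the imaginary axis, and the discontinuity of \S\ref{loop} (which concerns the eigenvector \emph{direction} jumping as $\epsilon\to 0$) is a different phenomenon altogether. The factor $\frac{ci-1}{ci+1}$ lies on the unit circle only because the $(3,3)$ entry is already assumed purely imaginary in Theorem \ref{T3}; it is not what forces imaginarity. The $i$ enters purely by normalization: since $\alpha x$ is an eigenvector whenever $x$ is (the convention announced at the top of Section 3), ``constraining the third column as in Theorem \ref{T3}'' means choosing the phase $\alpha$ so that the last component of the third eigenvector is purely imaginary, i.e.\ multiplying the real eigenvector $[x,y,z]^{\mathsf T}$ by $i$ to obtain $[ix,iy,iz]^{\mathsf T}$, which has the form $[x',y',zi]^{\mathsf T}$ with $x',y'\in\mathbb{C}$ and $z\in\mathbb{R}$ demanded by Theorem \ref{T3}. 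After that single re-phasing the iteration stays in the class of Theorem \ref{T3} and your reduction to $t\mapsto 1+2t$ is correct; without this point your argument cannot get the third diagonal entry off the real axis, and the claimed limit $\mathrm{diag}(1,-1,i)$ would be unreachable.
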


\subsection{A special $2 \times 2$ matrix}
Consider a $2 \times 2$ matrix with real entries such that 
$A = \begin{bmatrix}
      \cos \theta_1 & \cos \theta_2 \\
      \sin \theta_1 & \sin \theta_2
      \end{bmatrix}
$
with $\theta_1$ restricted to the first quadrant and
$\theta_1 < \frac{\pi}{4}$ and $\theta_2 $ restricted to the fourth quadrant and 
$|\theta_2| > \frac{\pi}{4}$.
Then we can form an eigenvector matrix of the 
same form $X = \begin{bmatrix}
      \cos \theta_3 & \cos \theta_4 \\
      \sin \theta_3 & \sin \theta_4
      \end{bmatrix}
$ with the restrictions $\theta_3$, $\theta_4$ respectively in the first and 
fourth quadrants. If $\theta_1-\frac{\pi}{2} = \theta_2$, then the matrix is 
orthogonal, and so we exclude that condition. We prove that in the limit of 
repeatedly taking eigenvectors, the eigenvector matrix converges to 
$\begin{bmatrix}
1 & 0 \\
0 & -1 
\end{bmatrix}.
$

\begin{theorem}
 For $2 \times 2$ real matrix of the form
\begin{align}
A = 
\begin{bmatrix}
      \cos \theta_1 & \cos \theta_2 \\
      \sin \theta_1 & \sin \theta_2
      \end{bmatrix},
\end{align} starting with $\theta_1 < \frac{\pi}{4}, |\theta_2| > \frac{\pi}{4} 
$ and $\theta_1 - \frac{\pi}{2} \neq \theta_2$,
if $\theta_1$ restricted to the first quadrant and $\theta_2$ restricted to the 
fourth quadrant for all the eigenvector matrices $X_i$, then 
$$\lim_{i \to \infty}X_i = \begin{bmatrix}
1 & 0 \\
0 & -1 
\end{bmatrix}.
$$
\end{theorem}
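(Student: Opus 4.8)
The plan is to follow the template of Theorem \ref{T1}: reduce the two–dimensional recursion on the pair of column angles $(\theta_1,\theta_2)$ to a geometrically convergent scalar recursion, and then identify the unique fixed point of the map with $\mathrm{diag}(1,-1)$. First I would set up the eigen-direction equation. Writing an eigenvector as $(\cos\theta,\sin\theta)^{\top}$ and imposing that $A(\cos\theta,\sin\theta)^{\top}$ be parallel to $(\cos\theta,\sin\theta)^{\top}$, the vanishing of the $2\times2$ cross product gives, after the substitution $t=\tan\theta$, the quadratic
\[
\cos\theta_2\,t^{2}+(\cos\theta_1-\sin\theta_2)\,t-\sin\theta_1=0 .
\]
Since $\det A=\sin(\theta_2-\theta_1)=-\sin\psi$ with $\psi:=\theta_1-\theta_2\in(\tfrac{\pi}{4},\tfrac{3\pi}{4})$, the determinant is negative, the eigenvalues are real of opposite sign, and the eigen-directions are genuinely real. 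The product of the roots is $t_3t_4=-\sin\theta_1/\cos\theta_2<0$, so one root is positive and one negative; taking the positive root as $\tan\theta_3$ (first quadrant) and the negative as $\tan\theta_4$ (fourth quadrant) shows that $X_1$ has the same column form as $A$ and that the map $(\theta_1,\theta_2)\mapsto(\theta_3,\theta_4)$ carries the admissible region into itself. I would also record that evaluating the quadratic at $t=1$ gives $(\cos\theta_1-\sin\theta_1)+(\cos\theta_2-\sin\theta_2)>0$, which forces $t_3<1$, so the condition $\theta_1<\tfrac{\pi}{4}$ is preserved at every iteration.

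Next I would locate the fixed points. Requiring a column of $A$ itself to be an eigen-direction yields $\sin\theta_1\,\sin(\theta_1-\theta_2)=0$ for the first column and $\cos\theta_2\,\sin(\theta_2-\theta_1)=0$ for the second; because $\psi\neq 0,\pi$ on the admissible region (this is exactly where the exclusion $\theta_1-\tfrac{\pi}{2}\neq\theta_2$ together with the quadrant restrictions enters), the only fixed point is $(\theta_1,\theta_2)=(0,-\tfrac{\pi}{2})$, corresponding to $\mathrm{diag}(1,-1)$. For the convergence I would introduce the two defect coordinates $\varepsilon_1=\theta_1$ and $\varepsilon_2=\theta_2+\tfrac{\pi}{2}$ and show each is driven to $0$. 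A short computation with the roots gives the clean identity
\[
\cos(\theta_3-\theta_4)=\frac{\cos\theta_2-\sin\theta_1}{\sqrt{2\,(1+\sin\psi)}},
\]
which shows the orthogonality defect $|\cos\psi|$ contracts strictly toward $0$, so $\psi\to\tfrac{\pi}{2}$. Linearising the root map about the fixed point gives $\theta_3\approx\tfrac12\theta_1$ and $\varepsilon_2'\approx\tfrac12\varepsilon_2$, which is precisely the halving recursion behind equation \eqref{trig1}. Summing the geometric series $\tfrac12+\tfrac14+\tfrac18+\cdots$ as in Theorem \ref{T1} then drives $\theta_1\to 0$ and $\theta_2\to-\tfrac{\pi}{2}$, whence $X_i\to\mathrm{diag}(1,-1)$.

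The hard part will be upgrading the local, linearised halving to a global statement valid on the whole admissible region rather than only near the fixed point. Concretely, one must show that the coupled map is a contraction, or that $(\varepsilon_1,\varepsilon_2)$ decreases monotonically, throughout $\theta_1\in(0,\tfrac{\pi}{4})$, $\theta_2\in(-\tfrac{\pi}{2},0)$, and that the orbit cannot stall on the boundary before reaching the fixed point. The displayed identity for $\cos(\theta_3-\theta_4)$ controls the $\psi$–direction cleanly, but pinning down the individual angle $\theta_1$ away from the fixed point requires the more delicate monotonicity estimate on the positive root of the quadratic, since $\psi\to\tfrac{\pi}{2}$ alone only forces the limit to be some orthogonal (reflection) matrix, and a further argument is needed to exclude a reflection at a nonzero axis. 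Finally, the orthogonal locus $\psi=\tfrac{\pi}{2}$ is excluded precisely because it is an invariant family of reflections on which $\det(X_i^{\dagger}X_i)=1$ already holds and the defect coordinates degenerate, so it lies outside the scope of this contraction argument.
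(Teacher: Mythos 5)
Your algebraic setup is correct and genuinely different from the paper's. You work with the characteristic quadratic $\cos\theta_2\,t^{2}+(\cos\theta_1-\sin\theta_2)\,t-\sin\theta_1=0$ in $t=\tan\theta$, use Vieta's formulas to check invariance of the admissible region, identify $(0,-\tfrac{\pi}{2})$ as the unique fixed point, and derive the identity for $\cos(\theta_3-\theta_4)$; all of these check out (writing $\varepsilon_1=\theta_1$, $\varepsilon_2=\theta_2+\tfrac{\pi}{2}$, your identity reduces to $\cos(\theta_3-\theta_4)=\cos\bigl(\tfrac{\varepsilon_1+\varepsilon_2}{2}\bigr)\tan\bigl(\tfrac{\varepsilon_2-\varepsilon_1}{2}\bigr)$, which does give a uniform contraction factor for the defect $|\cos\psi|$ on the admissible region). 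The paper takes a different route: it brackets the two eigenvalues in disjoint Gerschgorin (column) discs, reads off the strict monotonicity $\theta_3<\theta_1$ and $|\theta_4|>|\theta_2|$ directly from the eigenvector equations, and then shows the decrements $\tan\theta_1-\tan\theta_3$ and $\tan|\theta_4|-\tan|\theta_2|$ are bounded below whenever the angles stay away from $0$ and $-\tfrac{\pi}{2}$, so the monotone sequences can only converge to those values.

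The gap you flag yourself is real, and it is exactly the step your proposal does not supply. Contraction of $|\cos\psi|$ only forces every limit point to be orthogonal, and the linearised halving $\theta_3\approx\tfrac12\theta_1$ is valid only near $(0,-\tfrac{\pi}{2})$, so nothing in the proposal rules out the orbit accumulating on a reflection about a nonzero axis. The missing lemma is a global decrement bound for $\theta_1$ alone; it is in fact available inside your own framework, since evaluating your quadratic $q$ at $t=\tan\theta_1$ gives $q(\tan\theta_1)\cos^{2}\theta_1=\sin\theta_1\sin(\theta_1-\theta_2)>0$ while $q(0)=-\sin\theta_1<0$, whence $0<t_3<\tan\theta_1$ strictly; what is then needed is the quantitative version (how far $t_3$ is pushed below $\tan\theta_1$ when $\theta_1$ is bounded away from $0$), which is precisely the role of the paper's inequality \eqref{t3lb}. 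Until that estimate, or an equivalent contraction bound valid on the whole region, is written down, you have a correct plan rather than a complete proof. A minor further omission: you verify that $t_3<1$ is preserved but not the companion bound $t_4<-1$ (i.e. $|\theta_4|>\tfrac{\pi}{4}$), which is also needed for invariance of the region and follows by evaluating $q$ at $t=-1$.
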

\begin{proof}
 Consider the first eigenvector matrix, $X = \begin{bmatrix}
      \cos \theta_3 & \cos \theta_4 \\
      \sin \theta_3 & \sin \theta_4
      \end{bmatrix}$  We have from the Gerschgorin circle theorem the 
eigenvalues $\lambda_1$ and $\lambda_2$ of matrix $A$ satisfying,
  \begin{align}
   \cos \theta_1  - \sin \theta_1 & \leq \lambda_1 \leq   \cos \theta_1  + \sin 
\theta_1  \\
   \sin \theta_2 - \cos \theta_2 & \leq \lambda_2 \leq  \sin \theta_2 + \cos 
\theta_2.
  \end{align}
 We have 
 \begin{align}
  \begin{bmatrix}
      \cos \theta_1 & \cos \theta_2 \\
      \sin \theta_1 & \sin \theta_2
      \end{bmatrix} 
      \begin{bmatrix}
      \cos \theta_3  \\
      \sin \theta_3 
      \end{bmatrix} = \lambda_1  \begin{bmatrix}
      \cos \theta_3  \\
      \sin \theta_3 
      \end{bmatrix}.
 \end{align}
 
 Which gives 
 \begin{align}
  \sin \theta_1  \cos \theta_3   +  \sin \theta_2 \sin \theta_3 & = \lambda_1  
\sin \theta_3  \\
  \sin \theta_1  \cos \theta_3   +  \sin \theta_2 \sin \theta_3 & \geq  (\cos 
\theta_1  - \sin \theta_1)   \sin \theta_3.  \label{gr1}
\end{align}  
Using the fact that $\sin \theta_2 < 0$, and $- \sin \theta_1 - \sin \theta_2  
>0$,
\begin{align}
  \frac{\sin \theta_1}{\cos \theta_1  - \sin \theta_1 - \sin \theta_2 } &\geq  
\frac{\sin \theta_3}{ \cos \theta_3} \label{t3relation}\\
  \tan \theta_1 & >  \tan \theta_3  
 \end{align}
This implies $\theta_1 > \theta_3$. 

Similarly we have,

 \begin{align}
  \begin{bmatrix}
      \cos \theta_1 & \cos \theta_2 \\
      \sin \theta_1 & \sin \theta_2
      \end{bmatrix} 
      \begin{bmatrix}
      \cos \theta_4  \\
      \sin \theta_4 
      \end{bmatrix} = \lambda_2  \begin{bmatrix}
      \cos \theta_4  \\
      \sin \theta_4 
      \end{bmatrix}.
 \end{align}
 
 Which gives, 
 \begin{align}
   \cos \theta_1 \cos \theta_4  +  \cos \theta_2 \sin \theta_4 &= \lambda_2  
\cos \theta_4  \\
    \cos \theta_1 \cos \theta_4  +  \cos \theta_2 \sin \theta_4 & \leq (\sin 
\theta_2 + \cos \theta_2 )  \cos \theta_4.  \label{gr2}
 \end{align}

 Rearranging \eqref{gr2} gives
 
 \begin{align}
  \frac{\sin \theta_4}{\cos \theta_4} & \leq \frac{\sin \theta_2 + \cos \theta_2 
-\cos \theta_1}{\cos \theta_2} \\
  \frac{\sin (-\theta_2) - \cos \theta_2 + \cos \theta_1}{\cos \theta_2}  &\leq 
\frac{\sin (-\theta_4)}{\cos \theta_4} \label{t4relation} \\
  \tan |\theta_2| < \tan |\theta_4|.
 \end{align}
This implies $|\theta_2| < |\theta_4|$. So the matrices formed are in a 
increasing sequence of angles corresponding to $|\theta_2|$ and decreasing
sequence of angles corresponding to $\theta_1$.

Let us denote $-\sin \theta_2 - \sin \theta_1$ by $\delta >0$, from equation 
\eqref{t3relation} we have,

\begin{align}
 \tan \theta_1 \frac{1}{1 + \frac{\delta}{\cos \theta_1}} \geq \tan \theta_3  \\
 \tan \theta_1 \left( 1- \frac{\delta}{\cos \theta_1} + \left(\frac{\delta}{\cos 
\theta_1} \right) ^2 \right) > \tan \theta_3 \\
 \tan \theta_1 - \tan \theta_3   > \tan \theta_1  \left( \frac{\delta}{\cos 
\theta_1} - \left(\frac{\delta}{\cos \theta_1} \right) ^2 \right). \label{t3lb}
\end{align}
From equation \eqref{t3lb} we can see that, for sufficiently large $\theta_1 > 0$, difference 
between $\tan \theta_3$ and $\tan \theta_1$ are not arbitrary small. So the 
sequence corresponding to $\theta_1$ should converge to zero.
Similarly we get the relation from equation \eqref{t4relation},

\begin{align}
 \frac{\cos \theta_1 - \cos \theta_2}{\cos \theta_2} \leq \tan |\theta_4| - \tan 
|\theta_2| .
\end{align}
Which says that consecutive difference between $\tan \theta$ increases and hence 
the sequence correspond to angles will converge to $ \frac{-\pi}{2}$.

So in the limit of repeatedly taking eigenvectors, the eigenvector matrix 
converges to $\begin{bmatrix}
1 & 0 \\
0 & -1 
\end{bmatrix}$.

\end{proof}

\section*{Conclusion}
The proposed conjecture is on the limiting behavior of recursive evaluation of 
eigen directions of linear operators up to dimension seven. It is to be proven in general that, the limiting
eigenvector matrix having columns as points on the unit $n$-ball, tend to be unitary.
Numerical results clearly show the convergence behavior up-to dimension seven, supporting the conjecture.
Some special cases of this recursive eigen extrusion procedure are proved affirmatively. 
Thus opening the further connection between the geometry of unit n-ball and eigen properties of a linear operator. 
\clearpage
\subsection*{Acknowledgement}
The author would like to thank Prof. Murugesan Venkatapathi  for much helpful discussions and insights.

\vspace{2mm}
\bibliographystyle{siam}
\bibliography{references1}
\end{document}